\documentclass[11pt,a4paper]{amsart}

\usepackage{amssymb,mathtools}
\usepackage{enumitem}
\usepackage{tikz}
\usepackage[hidelinks]{hyperref}
\usepackage[expansion=false,protrusion=false]{microtype}
\usepackage{setspace}
\calclayout

\newtheorem{theorem}{Theorem}
\newtheorem{lemma}[theorem]{Lemma}
\newtheorem{proposition}[theorem]{Proposition}
\newtheorem{corollary}[theorem]{Corollary}
\theoremstyle{remark}
\newtheorem{remark}[theorem]{Remark}
\newtheorem{example}[theorem]{Example}

\DeclareMathOperator*{\argmin}{argmin}
\DeclareMathOperator{\dom}{dom}
\DeclareMathOperator{\Sol}{Sol}
\DeclareMathOperator{\Solw}{Sol^w}
\DeclareMathOperator{\ri}{ri}

\DeclareMathOperator{\inte}{int}
\newcommand{\R}{\mathbb{R}}
\newcommand{\x}{x^*}
\newcommand{\OP}[1]{#1\text{-}\mathrm{OP}}

\begin{document}

\title[Connectedness for convex polynomial multi-objective optimization]{A connectedness
	result for convex polynomial multi-objective optimization without boundedness}

\author{Vu Trung Hieu}
\address{Ho Chi Minh City University of Technology, 268 Ly Thuong Kiet Street, Dien Hong
	Ward, Ho Chi Minh City, Vietnam}
\email{hieuvut@hcmut.edu.vn}

\subjclass[2020]{Primary 90C29; Secondary 90C25, 54C60}
\keywords{Multi-objective optimization, weak Pareto solution set, connectedness, convex
	polynomial, Frank--Wolfe theorem, recession function}

\begin{abstract}
	We prove that the weak Pareto solution set of a multi-objective optimization problem
	with convex polynomial objectives and constraints is connected, assuming neither
	boundedness of the constraint set nor a constraint qualification. Since this set is semi-algebraic, its connectedness implies that it is path-connected whenever it is nonempty. In particular, convex quadratic multi-objective optimization problems have connected weak Pareto solution sets; this answers the weak Pareto part of an open question posed by
	N.~D.~Yen in \emph{Recent Developments in Vector Optimization} (Springer, 2012).
\end{abstract}

\maketitle


\section{Introduction}\label{sec:intro}

For linear data, connectedness of the weak Pareto solution set is classical and needs no
boundedness assumption: the set is a connected union of finitely many faces of the feasible polyhedron; see \cite[Chapter~6]{lucbook} and
\cite[Chapter~7]{ehrgott2005multicriteria}. Beyond the
linear case, the solution set is studied as the image of the solution mapping attached to
a scalarization; the basic tool is the fact that the image of a connected set under an
upper semi-continuous set-valued mapping with connected values is connected; see, e.g.,
\cite[Lemma~2.1]{naccache1978connectedness} and \cite[Theorem~3.1]{warburton1983quasiconcave}. Connected
values are normally secured by generalized convexity of the objectives. Upper semi-continuity usually comes from a coercivity condition or, for convex polynomial
data, from the assumption that the solution mapping of the scalarization is
bounded-valued.

Throughout, $f_1,\dots,f_m$ and $g_1,\dots,g_s$ are convex polynomials on $\R^n$, and
\begin{equation}\label{eq:K}
	K:=\{x\in\R^n:\ g_j(x)\leq 0,\ j=1,\dots,s\}
\end{equation}
is assumed to be nonempty; it is closed and convex, and $s=0$ is allowed, in which case
$K=\R^n$. We write
$f:=(f_1,\dots,f_m)$, and $\OP{m}(K,f)$ denotes the multi-objective optimization problem
of minimizing $f$ over $K$ with respect to the ordering cone $\R^m_+$. A point $\x\in K$
is a \emph{weak Pareto solution} if there is no $x\in K$ with
$f(x)-f(\x)\in-\inte\R^m_+$; the set of such points is $\Solw(K,f)$. We write $K_\infty$
for the \emph{recession cone} of $K$ \cite[Section~8]{rockafellar1970convex},
$\Delta:=\{\xi\in\R^m_+:\sum_i\xi_i=1\}$, $f_{\xi}:=\sum_i\xi_if_i$, and we let
$\Theta:\Delta\rightrightarrows\R^n$, $\Theta(\xi):=\argmin_{x\in K}f_{\xi}(x)$, with $\Lambda:=\dom\Theta=\{\xi\in\Delta:\Theta(\xi)\neq\emptyset\}$.

For such data, Warburton's theorem \cite{warburton1983quasiconcave} gives the connectedness of $\Solw(K,f)$ when $K$ is bounded; then $\Theta$ is bounded-valued and upper semi-continuous. Without assuming $K$ to be bounded, the author and his coauthors proved in \cite[Theorem~6.4]{hieu2025unboundedness} that
$\Solw(K,f)$ is connected provided $\Theta$ has bounded values; there, upper semi-continuity of $\Theta$ at a point is supplied by \cite[Theorem~4.3.3]{bank1983non}, which requires the value at that point to be nonempty and bounded. Boundedness of the values thus enters at a single step of that
proof, but it is a strong requirement: it fails already for the simple convex quadratic
problem
\begin{equation}\label{eq:ex}
	f_1=x_1^2+x_2^2,\qquad f_2=(x_1-1)^2,\qquad K=\R^2,
\end{equation}
where $\Theta(\xi)=\{1\}\times\R$ for $\xi=(0,1)$. This note removes that requirement.

\begin{theorem}\label{thm:main}
	Let $f_1,\dots,f_m$ and $g_1,\dots,g_s$ be arbitrary convex polynomials on $\R^n$ and
	let $K$ be as in \eqref{eq:K}. Then $\Solw(K,f)$ is connected, and path-connected if
	it is nonempty.
\end{theorem}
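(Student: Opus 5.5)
The plan has four steps: show $\Solw(K,f)=\Theta(\Lambda)$, show $\Lambda$ is convex, show the graph of $\Theta$ over $\Lambda$ is closed, and conclude connectedness directly from these facts instead of from upper semi-continuity.

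\textbf{Step 1: scalarization.} Since the $f_i$ and $K$ are convex, the standard scalarization gives
\[
\Solw(K,f)=\bigcup_{\xi\in\Delta}\Theta(\xi)=\Theta(\Lambda).
\]
This is the usual separation argument applied to the convex set $f(K)+\R^m_+$, and it needs no constraint qualification.

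\textbf{Step 2: $\Lambda$ is convex.} The Frank--Wolfe type theorem for convex polynomials (Belousov--Klatte) says a convex polynomial bounded below on $K$ attains its infimum there. Hence $\xi\in\Lambda$ if and only if $\inf_K f_\xi>-\infty$. If $f_\xi$ and $f_\eta$ are bounded below on $K$, so is $f_{t\xi+(1-t)\eta}=tf_\xi+(1-t)f_\eta$. So $\Lambda$ is convex, hence connected. Each value $\Theta(\xi)$ is convex and closed.

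\textbf{Step 3: closed graph.} The graph of $\Theta$ over $\Lambda$ is closed in $\Lambda\times\R^n$: if $\xi^k\to\xi\in\Lambda$, $x^k\in\Theta(\xi^k)$ and $x^k\to x$, then $f_\xi(x)\le f_\xi(y)$ for all $y\in K$ by passing to the limit in $f_{\xi^k}(x^k)\le f_{\xi^k}(y)$.

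\textbf{Step 4: connectedness without upper semi-continuity.} Suppose $\Solw=U\cup V$ with $U,V$ disjoint, nonempty and relatively open. Each $\Theta(\xi)$ is connected, so it lies entirely in $U$ or entirely in $V$. This partitions $\Lambda$ into nonempty sets $\Lambda_U$ and $\Lambda_V$, and it suffices to show that both are open in $\Lambda$.

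This is the main obstacle, because $\Theta$ may have unbounded values and need not be upper semi-continuous. I would not try to prove upper semi-continuity. Instead I would prove \emph{lower}-type behaviour: for $\xi\in\Lambda$ and $\xi^k\to\xi$ in $\Lambda$, some points $x^k\in\Theta(\xi^k)$ converge to a point of $\Theta(\xi)$. Then, if $\xi\in\Lambda_U$, eventually $x^k\in U$ (since $U$ is relatively open and contains $\Theta(\xi)$), so $\xi^k\in\Lambda_U$, which gives openness.

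To get such $x^k$ I would exploit the polynomial structure. For a convex polynomial, the set of directions along which it stays bounded is a subspace on which it is affine, its \emph{constancy space} (the recession-function structure). Modulo the relevant lineality directions, $\Theta(\xi)$ has bounded "core". I would then pick $x^k$ to be the minimum-norm element of $\Theta(\xi^k)$ and argue it stays bounded. If $\|x^k\|\to\infty$, the normalized limit direction $d$ lies in $K_\infty$ and in the recession cone of the sublevel sets of $f_{\xi^k}$. By finiteness of faces of $\Delta$ (the set of $i$ with $\xi_i>0$ is locally constant on a relatively open face), one gets $f_i$ constant along $d$ for all relevant $i$. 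Then $x^k-td\in\Theta(\xi^k)$ for suitable $t$, contradicting minimal norm.

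Where the support of $\xi^k$ differs from that of $\xi$, I would first reduce by working face by face: $\Lambda$ is a finite union of convex pieces $\Lambda\cap\ri F$, and the faces are linked by closure. I would handle the boundary transitions using the closed graph from Step 3 together with the Frank--Wolfe attainment.

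\textbf{Path-connectedness.} $\Solw(K,f)$ is semi-algebraic: it is defined by first-order formulas in polynomials. A connected semi-algebraic set is semi-algebraically path-connected, which gives path-connectedness when the set is nonempty.
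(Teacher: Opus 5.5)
Steps 1--3 and the path-connectedness argument are fine. Your reduction in Step 4 is also sound in principle: a closed graph, together with bounded $x^k\in\Theta(\xi^k)$ for every sequence $\xi^k\to\xi$ in $\Lambda$, would make $\Lambda_U$ and $\Lambda_V$ open. The gap is that this \emph{lower-type} property is false once the support of $\xi^k$ differs from that of $\xi$. Since the property itself fails, no face-by-face patch can establish it.

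Take $n=1$, $K=\R$, $f_1\equiv 0$, $f_2=x^2$, $f_3=x$. Then $\Lambda=\{\xi\in\Delta:\xi_2>0\}\cup\{(1,0,0)\}$ and $\Theta(1,0,0)=\R$. With $\varepsilon_k\downarrow 0$, the weights $\xi^k:=(1-\varepsilon_k-\varepsilon_k^2,\varepsilon_k^2,\varepsilon_k)\to(1,0,0)$ give $\Theta(\xi^k)=\{-1/(2\varepsilon_k)\}$, which escapes to $-\infty$. The closed graph and Frank--Wolfe attainment both hold here, so they cannot handle such boundary transitions. At such a $\xi$, openness of $\Lambda_U$ cannot be read off from proximity to $\Theta(\xi)$ at all.

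Your minimum-norm sketch also has two concrete flaws.
\begin{itemize}
\item You only obtain constancy along $d$ for indices in the support of the limit $\xi$ (here $f_1$), not for those of $\xi^k$ (here $f_2,f_3$).
\item Even when every $f_i$ is constant along $d$, $d\in K_\infty$ does not give $-d\in K_\infty$. Lift the example to $K=\{x_1\ge x_2^2\}\subseteq\R^2$ with $f_2=x_2^2$, $f_3=x_2$. The minimum-norm points $(1/(4\varepsilon_k^2),-1/(2\varepsilon_k))$ of $\Theta(\xi^k)$ lie on the boundary parabola. Their asymptotic direction $d=(1,0)$ is a direction of constancy of all $f_i$, yet no shift $x^k-td$ with $t>0$ stays in $K$.
\end{itemize}

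The missing idea is to join $\Theta(\xi)$ to $\Theta(\xi')$ along the segment $[\xi,\xi']\subseteq\Lambda$ rather than by arbitrary sequences. In the example, the minimizer is constant on every segment issued from $(1,0,0)$. This is what the paper does.
\begin{itemize}
\item Fixing $\hat\xi\in\Lambda$, convexity of $\Lambda$ writes $\Solw(K,f)$ as the union of the sets $\Theta([\hat\xi,\xi])$. All of these contain $\Theta(\hat\xi)$, so everything reduces to the bicriteria problem with $u=f_{\hat\xi}$, $v=f_\xi$.
\item There the segment lemma gives $u(x_t)\le u(w^*)$ and $v(x_t)\le v(z^*)$ uniformly in $t\in(0,1)$. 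When $u,v$ have no common direction of constancy in $K_\infty\setminus\{0\}$, this confines all $W_t$, $t\in(0,1)$, to the compact set $K_c$.
\item Otherwise one passes to the projected problem on $d^\perp$, and the one-sidedness of $d$ is absorbed by the threshold $\tau$. This yields $W_{[0,1]}\cong W'_{[0,1]}\times[0,\infty)$ or $W'_{[0,1]}\times\R$, and an induction on $n$ finishes.
\end{itemize}
Along a single segment a bounded selection does exist, so your closed-graph framework would work there. Establishing it, however, needs exactly this machinery, which your proposal does not contain.
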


This answers in the negative the weak Pareto half of
\cite[Question~9.8]{yen2011linear}, which asks, for Pareto and for weak Pareto solutions
alike, whether a convex quadratic problem can have a disconnected solution set.

With upper semi-continuity no longer available, we proceed in two steps. First, we treat
the \emph{bicriteria} case $m=2$, which occupies Section~\ref{sec:bi}; there everything turns on whether the two objectives have a \emph{common} direction of constancy among the nonzero recession directions of $K$. If they have none, the \emph{segment lemma} (Lemma~\ref{lm:segment}) confines the minimizers of $(1-t)f_1+tf_2$ over $K$, for all $t\in(0,1)$, to one fixed compact convex set $K_c$; restricted to that set, the problem has a bounded-valued solution mapping, and \cite[Theorem~6.4]{hieu2025unboundedness} applies. If they have one, we build a \emph{new} bicriteria problem in
\emph{one} dimension less, from whose solution set the original one is recovered as a product
with a half-line or a line; an induction on $n$ closes the argument. Second, we pass
\emph{from $m$ objectives to two}, in Section~\ref{sec:main}: since $\Lambda$ is \emph{convex}
and $\xi\mapsto f_{\xi}$ is affine, fixing $\hat\xi\in\Lambda$ makes every segment $[\hat\xi,\xi]$ with $\xi\in\Lambda$ lie in $\Lambda$ and hence carry a bicriteria problem, so that $\Solw(K,f)$ is the union over $\xi\in\Lambda$ of the sets $\Theta([\hat\xi,\xi]):=\bigcup_{\eta\in[\hat\xi,\xi]}\Theta(\eta)$; each of them is connected by the case $m=2$ and contains $\Theta(\hat\xi)$.
Semi-algebraicity then gives path-connectedness.

\section{Preliminaries}\label{sec:pre}

A set $E\subseteq\R^n$ is \emph{connected} if it is not the union of two disjoint nonempty
relatively open subsets; the empty set is connected by convention. We use three facts from
general topology \cite[\S23]{munkres2000topology}: \textnormal{(T1)} a union of connected sets having a point in common is connected; \textnormal{(T2)} if $X\cong Y$, that is, $X$ and $Y$ are homeomorphic, then $X$ is connected if and only if $Y$ is; and \textnormal{(T3)} for
$F=[0,\infty)$ or $F=\R$, the product $X\times F$ is connected if and only if $X$ is. For set-valued mappings we use the notions of domain
and upper semi-continuity in the sense of \cite[Chapter~1]{aubin1990set}.

Convexity of $K$ and of the $f_i$ gives the scalarization identity
\begin{equation}\label{eq:scalar}
	\Solw(K,f)=\Theta(\Delta)=\bigcup_{\xi\in\Lambda}\Theta(\xi),
\end{equation}
where $\Theta(S):=\bigcup_{\xi\in S}\Theta(\xi)$ for $S\subseteq\Delta$; see, e.g.,
\cite[Chapter~3]{ehrgott2005multicriteria} or \cite[Chapter~4]{ansari2018vector}. No
constraint qualification is needed, the constraints being kept as a set constraint. Each $\Theta(\xi)$ is closed and convex, being empty or the intersection of $K$ with a sublevel set of the continuous convex function $f_\xi$.

Let $p:\R^n\to\R$ be convex, not necessarily polynomial, and let $d\in\R^n$. Since, by convexity of $p$, $t\mapsto(p(x+td)-p(x))/t$ is nondecreasing on $(0,\infty)$, the \emph{recession function}, also called the \emph{asymptotic function} in
\cite{AuslenderTeboulle2003},
\begin{equation}\label{eq:rec}
	p^\infty(d):=\lim_{t\to+\infty}\frac{p(x+td)-p(x)}{t}
	=\sup_{t>0}\frac{p(x+td)-p(x)}{t}\ \in(-\infty,+\infty]
\end{equation}
is well defined and independent of $x$
\cite[Proposition~2.5.2]{AuslenderTeboulle2003}. By \eqref{eq:rec}, $p(x+\sigma d)\leq p(x)+p^\infty(d)\,\sigma$ for all $x$ and all $\sigma>0$; hence
$p^\infty(d)\leq 0$ if and only if $p$ is bounded from above on one (equivalently, every)
ray of direction $d$. As for the opposite sign, $p^\infty(d)\geq 0$ whenever $p$ is bounded from below on one such ray, since otherwise $p(x+\sigma d)\leq p(x)+p^\infty(d)\,\sigma\to-\infty$; and if $p^\infty(d)>0$, then the limit in \eqref{eq:rec} gives $p(x+\sigma d)\to+\infty$ as $\sigma\to+\infty$, for every $x$. For general convex
$p$ the condition $p^\infty(d)\geq 0$ does not imply boundedness from below on the rays
of direction $d$ (take $p(x)=-\log(1+x)$ for $x\geq 0$, $p(x)=-x$ for $x<0$, and $d=1$).
For convex polynomials it does: by Lemma~\ref{lm:lindec} below when
$p^\infty(d)<+\infty$, and, when $p^\infty(d)=+\infty$, because
$p(x+\sigma d)\to+\infty$ as $\sigma\to+\infty$. For convex polynomials, therefore, $p^\infty(d)\geq 0$ is \emph{equivalent} to $p$ being bounded from below on
one (equivalently, every) ray of direction $d$.

\begin{lemma}[affine along $d$]\label{lm:lindec}
	Let $p$ be a convex polynomial on $\R^n$ and $d\in\R^n$ with $p^\infty(d)<+\infty$.
	Then $p(x+\sigma d)=p(x)+p^\infty(d)\,\sigma$ for all $x\in\R^n$ and $\sigma\in\R$.
\end{lemma}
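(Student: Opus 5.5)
The plan is to reduce to one variable. Fix $x\in\R^n$ and consider the univariate polynomial $q(\sigma):=p(x+\sigma d)$. It is convex on $\R$, being the restriction of a convex function to a line. By \eqref{eq:rec}, and since the limit does not depend on the base point, $\lim_{\sigma\to+\infty}q(\sigma)/\sigma=p^\infty(d)<+\infty$. So it suffices to show that a convex univariate polynomial $q$ with $\limsup_{\sigma\to+\infty}q(\sigma)/\sigma<+\infty$ is affine, $q(\sigma)=q(0)+c\sigma$. Then the slope must be $c=\lim q(\sigma)/\sigma=p^\infty(d)$, which gives the identity $p(x+\sigma d)=p(x)+p^\infty(d)\sigma$ for all $\sigma\in\R$, including negative $\sigma$, because $q$ is a polynomial identity on all of $\R$.

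For the univariate claim, write $q(\sigma)=\sum_{k=0}^{r}a_k\sigma^k$ with $a_r\neq0$. If $r\geq2$, then $q(\sigma)/\sigma\sim a_r\sigma^{r-1}$ as $\sigma\to+\infty$. Finiteness of the limit from above forces $a_r<0$. Convexity gives $q''\geq0$ on $\R$, but $q''$ has leading term $r(r-1)a_r\sigma^{r-2}$ with $r(r-1)a_r<0$. If $r=2$, this contradicts $q''\ge0$ immediately. If $r>2$, then $q''(\sigma)\to-\infty$ as $\sigma\to+\infty$, which is again a contradiction. Hence $r\leq1$ and $q$ is affine. Alternatively, one can skip derivatives: a convex function that is bounded above by an affine function on a ray must have slopes bounded by that limit, and a polynomial with slopes bounded above on $[0,\infty)$ has $\deg\le1$ unless its leading coefficient is negative. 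A negative leading coefficient makes $q'$ tend to $-\infty$, which contradicts the fact that $q'$ is nondecreasing.

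The only real obstacle is this degree argument, namely ruling out a negative leading coefficient of higher degree. That is exactly where polynomiality is used, and the counterexample $-\log(1+x)$ shows it cannot be avoided. The step is handled by the monotonicity of $q'$ (convexity) together with $q'$ being a polynomial of degree $r-1\ge1$ whose behavior as $\sigma\to+\infty$ is dictated by its leading coefficient. Everything else is bookkeeping: the limit is independent of $x$, and the identity extends to $\sigma<0$ by the polynomial identity in $\sigma$.
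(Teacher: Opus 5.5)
Your proof is correct and follows the paper's argument. You restrict to the line to get the univariate convex polynomial $q(\sigma)=p(x+\sigma d)$, rule out $\deg q\geq 2$ by playing convexity against the finiteness of $\lim_{\sigma\to+\infty}q(\sigma)/\sigma$, and read off the slope $p^\infty(d)$ from \eqref{eq:rec}. The only difference is the order of the two observations: the paper uses convexity first, to force a positive leading coefficient and hence $q(\sigma)/\sigma\to+\infty$, whereas you use finiteness first to get $a_r<0$ and then contradict $q''\geq 0$; this difference is immaterial.
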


\begin{proof}
	Fix $x$ and put $q(\sigma):=p(x+\sigma d)$, a convex polynomial in one variable. If
	$\deg q\geq 2$, convexity forces the degree to be even with positive leading
	coefficient, so $q(\sigma)/\sigma\to+\infty$ and $p^\infty(d)=+\infty$, contrary to the
	hypothesis. Hence $q$ is affine, with slope
	$\lim_{\sigma\to\infty}(q(\sigma)-q(0))/\sigma=p^\infty(d)$ by \eqref{eq:rec}.
\end{proof}

For $p^\infty(d)=0$ this is the \emph{directional constancy} property of convex polynomials
\cite[Proposition~3.2.1]{AuslenderTeboulle2003}. The property fails for general convex functions:
$e^{-x}$ has recession value $0$ in the direction $1$ without being constant.

\begin{corollary}[dichotomy]\label{cor:dicho}
	Let $p$ be a convex polynomial on $\R^n$ and $d\in\R^n$ such that $p$ is bounded from
	below on some ray of direction $d$. Then exactly one of the following holds:
	\textnormal{(a)} $p(x+\sigma d)=p(x)$ for all $x,\sigma$, and we say that $p$ is
	\emph{constant along $d$}; or \textnormal{(b)} $p(x+\sigma d)\to+\infty$ as
	$\sigma\to+\infty$, for every $x$.
\end{corollary}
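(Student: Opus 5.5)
The plan is to reduce everything to the sign of $p^\infty(d)$, using the facts about recession functions of convex polynomials recorded just before Lemma~\ref{lm:lindec}. First, since $p$ is bounded from below on some ray of direction $d$, the preliminary discussion gives $p^\infty(d)\geq 0$. Indeed, if $p^\infty(d)<0$, then $p(x+\sigma d)\leq p(x)+p^\infty(d)\,\sigma\to-\infty$ along that ray, which is a contradiction. So $p^\infty(d)\in[0,+\infty]$, and I split into the three cases $p^\infty(d)=0$, $0<p^\infty(d)<+\infty$, and $p^\infty(d)=+\infty$.

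If $p^\infty(d)=0$, Lemma~\ref{lm:lindec} applies, since $0<+\infty$. It gives $p(x+\sigma d)=p(x)$ for all $x\in\R^n$ and $\sigma\in\R$, which is (a).

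If $p^\infty(d)>0$, whether finite or $+\infty$, the limit in \eqref{eq:rec} is independent of $x$. Hence for every $x$ the quotient $(p(x+\sigma d)-p(x))/\sigma$ tends to a positive value or to $+\infty$. This forces $p(x+\sigma d)\to+\infty$ as $\sigma\to+\infty$, which is (b). In the finite positive case one could equivalently use Lemma~\ref{lm:lindec} to write $p(x+\sigma d)=p(x)+p^\infty(d)\sigma$ explicitly.

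Exclusivity is immediate. Under (a), $p(x+\sigma d)=p(x)$ is constant in $\sigma$ and cannot tend to $+\infty$, so (a) and (b) cannot hold together. Together with the case split, this shows that exactly one of them holds.

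There is no real obstacle here. The only point needing care is that (a) is required for all $\sigma\in\R$, including negative $\sigma$, and this is exactly what Lemma~\ref{lm:lindec} provides: it gives global affinity along the whole line rather than just the ray. The second point of care is the independence of $p^\infty(d)$ from the base point $x$. This is what upgrades a statement about one ray into a statement about every $x$, in both alternatives.
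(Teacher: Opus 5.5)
Your proposal is correct and follows the paper's own argument: the lower bound on one ray forces $p^\infty(d)\geq 0$, Lemma~\ref{lm:lindec} turns $p^\infty(d)=0$ into constancy along the whole line, and $p^\infty(d)\in(0,+\infty]$ gives (b) at every base point through the $x$-independent limit in \eqref{eq:rec}. Your explicit check that (a) and (b) exclude each other is a small addition that the paper leaves implicit.
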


Corollary~\ref{cor:dicho} is read off from Lemma~\ref{lm:lindec}: the lower bound forces
$p^\infty(d)\geq 0$, and the alternatives correspond to $p^\infty(d)=0$ and
$p^\infty(d)>0$, the case $p^\infty(d)=+\infty$ falling under (b). Its hypothesis cannot be
dropped: for $p(x)=x$ on $\R$ and $d=-1$ neither alternative holds.

\begin{corollary}[constraints]\label{cor:constr}
	For every $d\in K_\infty$ and every $j$ there is $\gamma_j\geq 0$, depending on $d$, with
	$g_j(x+\sigma d)=g_j(x)-\gamma_j\sigma$ for all $x,\sigma$.
\end{corollary}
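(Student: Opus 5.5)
The plan is to apply Lemma~\ref{lm:lindec} to each $g_j$, so the key step is to show that $g_j^\infty(d)$ is finite and nonpositive. Fix $d\in K_\infty$ and an index $j$, and choose any $x_0\in K$, which exists because $K$ is nonempty. By the definition of the recession cone, $x_0+\sigma d\in K$ for every $\sigma\geq 0$. Hence $g_j(x_0+\sigma d)\leq 0$ for all $\sigma\geq 0$, so $g_j$ is bounded from above on the ray from $x_0$ in direction $d$.

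By the discussion following \eqref{eq:rec}, boundedness from above on one ray of direction $d$ is equivalent to $g_j^\infty(d)\leq 0$. Directly, $g_j^\infty(d)=\sup_{t>0}(g_j(x_0+td)-g_j(x_0))/t\leq \sup_{t>0}(-g_j(x_0))/t$. If $g_j(x_0)<0$ this supremum is $0$, since the quotient tends to $0$ as $t\to+\infty$ and is negative for every $t$. If $g_j(x_0)=0$ the bound is $0$ as well. In either case $g_j^\infty(d)\leq 0<+\infty$.

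Lemma~\ref{lm:lindec} now applies and gives $g_j(x+\sigma d)=g_j(x)+g_j^\infty(d)\,\sigma$ for all $x\in\R^n$ and $\sigma\in\R$. Setting $\gamma_j:=-g_j^\infty(d)\geq 0$ gives exactly the claimed identity.

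I expect no real obstacle here. The only care needed is to note that the bound on the ray must hold for all large $\sigma$, which membership in the recession cone supplies, and that $g_j^\infty(d)$ is real rather than $-\infty$, which holds because recession values lie in $(-\infty,+\infty]$ as stated after \eqref{eq:rec}. The constant $\gamma_j$ depends only on $d$ and $j$, since $g_j^\infty(d)$ is independent of the base point.
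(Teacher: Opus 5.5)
Your route is the paper's: since $d\in K_\infty$, the ray $x_0+\R_+d$ lies in $K$ for any $x_0\in K$. Hence $g_j\le 0$ on it, this upper bound gives $g_j^\infty(d)\le 0$, and Lemma~\ref{lm:lindec} yields the identity with $\gamma_j:=-g_j^\infty(d)$. Your citation of the discussion after \eqref{eq:rec} already supplies $g_j^\infty(d)\le 0$, so the proof is complete with it alone. The remarks on finiteness and on independence of the base point are also fine.

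The ``direct'' verification you add, however, is wrong as written and should be dropped or repaired. If $g_j(x_0)<0$, then $-g_j(x_0)/t$ is \emph{positive} for every $t>0$, not negative. Moreover $\sup_{t>0}(-g_j(x_0))/t=+\infty$, because the quotient blows up as $t\to 0^+$. So bounding the supremum termwise only gives $g_j^\infty(d)\le+\infty$, which is useless. The repair is to use the limit form in \eqref{eq:rec}; the supremum equals the limit as $t\to+\infty$ because the difference quotient is nondecreasing. Then $g_j^\infty(d)=\lim_{t\to+\infty}(g_j(x_0+td)-g_j(x_0))/t\le\lim_{t\to+\infty}(-g_j(x_0))/t=0$.
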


Corollary~\ref{cor:constr} is Lemma~\ref{lm:lindec} applied to $g_j$: for any $x_0\in K$ the ray $x_0+\R_+d$ lies in $K$ because $d\in K_\infty$, so $g_j\leq 0$ on it and hence $g_j^\infty(d)=:-\gamma_j\leq 0$; here the sign comes from an \emph{upper} bound, so no lower bound on $g_j$
is needed.

The polynomial structure enters a second time through the following attainment
theorem, which requires no constraint qualification.

\begin{theorem}[Frank--Wolfe-type theorem \cite{belousov2002frank}]\label{thm:FW}
	Let $p,g_1,\dots,g_s$ be convex polynomials and $K$ as in \eqref{eq:K}. If $p$ is bounded from below on $K$, then $p$ attains its minimum there.
\end{theorem}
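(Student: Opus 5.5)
The plan is a double induction, lexicographic on the pair $(n,s)$: the ambient dimension and the number of constraints. At each step we either obtain a minimizer directly or pass to a problem with the same infimum and a smaller pair. Let $p_*:=\inf_K p>-\infty$ and take a minimizing sequence $(x_k)\subset K$ with $p(x_k)\to p_*$. If $(x_k)$ has a bounded subsequence, a cluster point lies in the closed set $K$ and attains $p_*$ by continuity, and we are done. The case $n=0$ is trivial, so assume from now on that $\|x_k\|\to\infty$.

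\emph{Step 1: a direction of constancy.} Pass to a subsequence with $(x_k-x_0)/\|x_k-x_0\|\to d$ for a fixed $x_0\in K$, so that $\|d\|=1$. For fixed $t>0$ and $k$ large, the point $x_0+t(x_k-x_0)/\|x_k-x_0\|$ lies on the segment $[x_0,x_k]\subset K$. By convexity of $p$ its value is at most $\max\{p(x_0),p(x_k)\}\le C$. Letting $k\to\infty$ gives $x_0+td\in K$ and $p(x_0+td)\le C$ for all $t>0$. Hence $d\in K_\infty$, and $p$ is bounded above on this ray. It is also bounded below there, since the ray lies in $K$. By the argument preceding Lemma~\ref{lm:lindec}, $p^\infty(d)\le 0$, and Corollary~\ref{cor:dicho} excludes alternative (b). So $p$ is constant along $d$. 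By Corollary~\ref{cor:constr}, $g_j(x+\sigma d)=g_j(x)-\gamma_j\sigma$ with $\gamma_j\ge 0$. Put $J_0:=\{j:\gamma_j=0\}$.

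\emph{Step 2a: all constraints constant along $d$.} If $J_0=\{1,\dots,s\}$, then $K+\R d=K$ and $p$ is invariant under translation by $d$. The problem is therefore equivalent to minimizing $p|_H$ over $K\cap H$, where $H:=d^\perp\cong\R^{n-1}$. The restrictions of $p$ and $g_j$ to $H$ are convex polynomials in $n-1$ variables, and $K\cap H$ is nonempty because it contains the projection of $K$ onto $H$. The infimum is unchanged. The induction hypothesis in dimension $n-1$ yields a minimizer, which is also a minimizer on $K$.

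\emph{Step 2b: some constraint strictly decreasing along $d$.} If $J_0\neq\{1,\dots,s\}$, set $K':=\{x:g_j(x)\le 0,\ j\in J_0\}\supseteq K$. For any $x\in K'$, the constraints in $J_0$ are unchanged along $x+\sigma d$, while those with $\gamma_j>0$ tend to $-\infty$. Hence $x+\sigma d\in K$ for $\sigma$ large, and $p(x+\sigma d)=p(x)$. Therefore $\inf_{K'}p=\inf_K p=p_*$; in particular $p$ is bounded below on $K'$. The pair $(n,|J_0|)$ is smaller than $(n,s)$, so induction gives a minimizer $\bar x\in K'$. Then $\bar x+\sigma d\in K$ for $\sigma$ large, and this point attains $p_*$ on $K$.

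The base cases are covered by the same scheme: when $s=0$, Step 2a always applies. I expect Step 1 to be the main obstacle. Everything hinges on extracting a \emph{genuine} direction of constancy from an unbounded minimizing sequence. The key inputs are the convexity-segment bound and the polynomial dichotomy of Corollary~\ref{cor:dicho}; the latter is exactly what fails for general convex functions such as $e^{-x}$. The remaining bookkeeping is checking that restricting to $d^\perp$ and dropping constraints preserves convexity, polynomiality and nonemptiness, which is routine.
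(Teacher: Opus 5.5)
Your proof is correct. The paper does not prove Theorem~\ref{thm:FW}; it imports the result from \cite{belousov2002frank}. Your argument is essentially the Belousov--Klatte reduction that the paper describes in Section~\ref{subs:deg} and reuses in Proposition~\ref{prop:split}. An unbounded minimizing sequence produces $d\in K_\infty$ along which $p$ is constant. The constraints strictly decreasing along $d$ can then be dropped without changing the infimum; your Step~2b is Proposition~\ref{prop:split}(ii),(iv) for a single objective. Finally, the dimension is cut by passing to $d^\perp$.

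Your argument is also non-circular, because Lemma~\ref{lm:lindec} and Corollaries~\ref{cor:dicho} and~\ref{cor:constr} are proved without Theorem~\ref{thm:FW}. It would therefore make the paper self-contained at this point.

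The only difference is bookkeeping. The relaxed set $\{x: g_j(x)\le 0,\ j\in J_0\}$ of Step~2b is already invariant under $\R d$. You could slice it by $d^\perp$ immediately, as in Proposition~\ref{prop:split}(i), and run a plain induction on $n$. Your lexicographic induction on $(n,s)$ is a harmless detour that splits this one reduction into two.
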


Since, for $\xi\in\Lambda$,
\begin{equation}\label{eq:Thetasys}
	\Theta(\xi)=\Bigl\{x\in\R^n:\ g_j(x)\leq 0\ (j=1,\dots,s),\
	f_{\xi}(x)-\min_Kf_{\xi}\leq 0\Bigr\}
\end{equation}
is again given by a convex polynomial system, Theorem~\ref{thm:FW} will be applied once
more, in Section~\ref{sec:bi}, to a convex polynomial minimized \emph{on} $\Theta(\xi)$. Finally, by Theorem~\ref{thm:FW}, $\Lambda$ is the set of $\xi\in\Delta$ for which $f_\xi$ is bounded from below on $K$, and
\begin{equation}\label{eq:Lconvex}
	\Lambda\ \text{is convex,}
\end{equation}
since $\eta\mapsto f_\eta$ is affine: if $f_{\xi'}\geq c'$ and $f_{\xi''}\geq c''$ on $K$,
then $f_\xi\geq\min\{c',c''\}$ for $\xi\in[\xi',\xi'']$. This is the argument of
\cite[Theorem~6.4]{hieu2025unboundedness}.

\section{The bicriteria case}\label{sec:bi}

\subsection{Setting and the segment lemma}\label{subs:seg}

Throughout this section $u,v$ are convex polynomials bounded from below on $K$; we write
$\OP{2}(K,u,v)$ for the bicriteria problem and, for $t\in[0,1]$ and $T\subseteq[0,1]$,
\begin{equation}\label{eq:WT}
	p_t:=(1-t)u+tv,\qquad W_t:=\argmin_K p_t,\qquad W_T:=\bigcup_{t\in T}W_t .
\end{equation}
Each $p_t$ is bounded from below on $K$, so $W_t\neq\emptyset$ by Theorem~\ref{thm:FW}, and
$W_{[0,1]}=\Solw(K,(u,v))$ by \eqref{eq:scalar}, the weights $(1-t,t)$, $t\in[0,1]$, exhausting $\Delta$. Let
\[
D:=\bigl\{d\in K_\infty:\ u(x+\sigma d)=u(x),\ v(x+\sigma d)=v(x)
\text{ for all } x\in\R^n,\ \sigma\in\R\bigr\}
\]
be the set of directions in $K_\infty$ along which both $u$ and $v$ are constant (Corollary~\ref{cor:dicho}(a)).

\begin{remark}\label{rmk:D}
	$D$ is a nonempty convex cone. Indeed, $0\in D$; if $d\in D$ and $\lambda\geq 0$, then
	$\lambda d\in K_\infty$ because $K_\infty$ is a cone, and $u,v$ are constant along
	$\lambda d$; and if $d,d'\in D$, then $d+d'\in K_\infty$ because $K_\infty$ is
	convex, and $u(x+\sigma(d+d'))=u((x+\sigma d')+\sigma d)=u(x+\sigma d')=u(x)$ for all $x,\sigma$, and likewise for $v$. It need not be a subspace, since $K_\infty$ need not contain $-d$: for
	$K=\{x_1\geq 0\}\subseteq\R^2$ and $u=v=x_2^2$ one gets $D=\{(d_1,0):d_1\geq 0\}$,
	a ray. For $d\in D\setminus\{0\}$, the threshold function $\tau$ of Section~\ref{subs:deg} appears exactly when $-d\notin K_\infty$: both conditions are equivalent to $\gamma_j>0$ for some $j$ in Corollary~\ref{cor:constr}, that is, to some constraint $g_j$ being strictly decreasing along $d$.
\end{remark}

\begin{lemma}[segment lemma]\label{lm:segment}
	\textnormal{(i)} $W_0=\argmin_Ku$ and $W_1=\argmin_Kv$ are nonempty; $v$ attains its minimum over $W_0$ and $u$ attains its minimum over $W_1$, say at
	\begin{equation}\label{eq:zstar}
		z^*\in\argmin\{v(z):\ z\in W_0\},\qquad w^*\in\argmin\{u(w):\ w\in W_1\} .
	\end{equation}
	\textnormal{(ii)} For every $t\in(0,1)$ and every $x_t\in W_t$,
	$v(x_t)\leq v(z^*)$ and $u(x_t)\leq u(w^*)$.
\end{lemma}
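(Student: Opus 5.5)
For part (i), nonemptiness of $W_0$ and $W_1$ follows from Theorem~\ref{thm:FW}: $u$ and $v$ are convex polynomials bounded from below on $K$, so each attains its minimum on $K$. To see that $v$ attains its minimum over $W_0$, I will use the representation \eqref{eq:Thetasys}:
\[
W_0=\{x:\ g_j(x)\leq 0\ (j=1,\dots,s),\ u(x)-\min_Ku\leq 0\}.
\]
This is again a convex polynomial system. Since $W_0\subseteq K$, the polynomial $v$ is bounded from below on $W_0$, so Theorem~\ref{thm:FW}, applied to $v$ and this enlarged system, produces a minimizer $z^*$. The symmetric argument, with $W_1$ described by $v-\min_Kv\leq 0$ and $u$ as objective, produces $w^*$.

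For part (ii), I will use the standard exchange argument for scalarizations. Fix $t\in(0,1)$ and $x_t\in W_t$. Optimality of $x_t$ for $p_t$ tested against $z^*\in K$ gives
\[
(1-t)u(x_t)+tv(x_t)\leq (1-t)u(z^*)+tv(z^*).
\]
Since $z^*\in W_0$ and $x_t\in K$, we also have $u(z^*)=\min_Ku\leq u(x_t)$. Hence $(1-t)u(z^*)\leq (1-t)u(x_t)$. Subtracting this from the first inequality yields $tv(x_t)\leq tv(z^*)$, and dividing by $t>0$ gives $v(x_t)\leq v(z^*)$.

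Exchanging the roles of $u$ and $v$ gives the second inequality. Optimality of $x_t$ tested against $w^*$, together with $v(w^*)=\min_Kv\leq v(x_t)$ and $1-t>0$, yields $u(x_t)\leq u(w^*)$.

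Only the attainment on $W_0$ and $W_1$ needs care, and it is handled by Theorem~\ref{thm:FW} through \eqref{eq:Thetasys}. That attainment is what makes $z^*$ and $w^*$ well defined. Note that (ii) does not actually need $z^*$ to minimize $v$ on $W_0$; it only needs $z^*\in W_0$. Choosing the minimizers, however, gives the sharpest bounds, which is presumably how the lemma is used later to confine $W_{(0,1)}$ to a compact set.
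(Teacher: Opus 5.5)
Your proposal is correct and matches the paper's proof essentially step by step. For (i), you apply Theorem~\ref{thm:FW} to $v$ on the convex polynomial system \eqref{eq:Thetasys} describing $W_0$, and symmetrically for $W_1$. For (ii), you use the exchange inequality $p_t(x_t)\leq p_t(z^*)$ together with $u(z^*)=\min_Ku\leq u(x_t)$ and division by $t>0$ (respectively $1-t>0$). Your side remark is also right: (ii) only uses $z^*\in W_0$ and $w^*\in W_1$, not their minimizing property.
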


\begin{proof}
	(i) $W_0\neq\emptyset$ by Theorem~\ref{thm:FW}, as $u$ is bounded from below on $K$. By \eqref{eq:Thetasys}, applied to $\OP{2}(K,u,v)$ with $\xi=(1,0)$, $W_0$ is again given by a convex polynomial system, and $v$ is bounded from below on $K\supseteq W_0$; so
	Theorem~\ref{thm:FW} applies once more, now to $v$ on $W_0$, and the minimum in
	\eqref{eq:zstar} is attained. Interchanging $u$ and $v$ handles $W_1$.

	(ii) Fix $t\in(0,1)$ and $x_t\in W_t$. As $z^*\in W_0\subseteq K$ and $x_t$ minimizes
	$p_t$ over $K$, we have $p_t(x_t)\leq p_t(z^*)$, that is
	\[
	(1-t)\bigl[u(x_t)-u(z^*)\bigr]+t\bigl[v(x_t)-v(z^*)\bigr]\leq 0 .
	\]
	Since $u(z^*)=\min_Ku$ and $x_t\in K$, the first bracket is nonnegative, and so is
	$1-t$; hence $t[v(x_t)-v(z^*)]\leq 0$, and dividing by $t>0$ yields
	$v(x_t)\leq v(z^*)$. Interchanging $u$ with $v$, $t$ with $1-t$, and $z^*$ with $w^*$ gives $u(x_t)\leq u(w^*)$. The argument needs $t>0$ and $1-t>0$, which is why (ii) is stated for the open interval only.
\end{proof}

\begin{remark}\label{rmk:segment}
	The point $z^*$ is a \emph{hierarchical} minimizer: it minimizes $v$ among the
	minimizers of $u$. Such points are the viscosity solutions of Attouch
	\cite[Theorem~4.4]{attouch1996viscosity}, and for $v=\frac12\|\cdot\|^2$ and $K=\R^n$,
	$z^*$ is the minimum-norm element of $W_0$ and the first inequality in Lemma~\ref{lm:segment}(ii) is the classical Tikhonov estimate
	$\|x_t\|\leq\|z^*\|$.
\end{remark}

\subsection{The nondegenerate case \texorpdfstring{$D=\{0\}$}{D=0}}\label{subs:nondeg}

When $D=\{0\}$, everything rests on a single compactness property. For $c\in\R$ put
\[
K_c:=\{x\in K:\ u(x)\leq c,\ v(x)\leq c\}.
\]

\begin{lemma}\label{lm:Kc}
	Assume $D=\{0\}$. Then $K_c$ is compact for every $c\in\R$.
\end{lemma}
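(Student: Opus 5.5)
The plan is the standard recession-cone argument. $K_c$ is closed and convex, since it is the intersection of $K$ with sublevel sets of the continuous convex functions $u$ and $v$. If it is empty there is nothing to prove. Otherwise, by \cite[Theorem~8.4]{rockafellar1970convex}, a nonempty closed convex set is bounded if and only if its recession cone is $\{0\}$. So it suffices to show $(K_c)_\infty=\{0\}$, that is, that every $d\in(K_c)_\infty$ lies in $D$; the hypothesis $D=\{0\}$ then gives the conclusion.

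Fix $d\in(K_c)_\infty$ and some $x_0\in K_c$. Then $x_0+\sigma d\in K_c\subseteq K$ for all $\sigma\geq0$. Since $K_c\subseteq K$ and both are closed convex, $(K_c)_\infty\subseteq K_\infty$, so $d\in K_\infty$. Along the ray $x_0+\R_+d$, both $u$ and $v$ are bounded above by $c$, and they are bounded below because $u,v$ are bounded from below on $K$, which contains the ray.

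Now apply Corollary~\ref{cor:dicho} to $u$ with direction $d$; its hypothesis holds by the lower bound just noted. Alternative (b) would give $u(x_0+\sigma d)\to+\infty$, contradicting $u\leq c$ on the ray. Hence (a) holds: $u(x+\sigma d)=u(x)$ for all $x\in\R^n$ and all $\sigma\in\R$. The same argument for $v$ shows that $v$ is constant along $d$. Together with $d\in K_\infty$, this says $d\in D=\{0\}$.

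The only delicate point is that membership in $D$ requires constancy for all $\sigma\in\R$, including negative $\sigma$, and at all $x\in\R^n$, not just on the ray inside $K$. This is exactly what Corollary~\ref{cor:dicho}(a) supplies, through Lemma~\ref{lm:lindec}, thanks to the polynomial structure. The sign information needed to invoke it comes from the standing assumption that $u,v$ are bounded below on $K$.
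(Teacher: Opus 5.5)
Your proposal is correct and follows essentially the same argument as the paper: Rockafellar's Theorem~8.4 reduces compactness to $(K_c)_\infty=\{0\}$. Then Corollary~\ref{cor:dicho}, applied to $u$ and $v$ along a ray in $K_c$ (lower bound inherited from $K$, upper bound $c$ excluding alternative (b)), forces $d\in D=\{0\}$. The only cosmetic difference is that you argue directly rather than by contradiction, and you justify $d\in K_\infty$ via $(K_c)_\infty\subseteq K_\infty$ where the paper cites Rockafellar's Theorem~8.3.
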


\begin{proof}
	$K_c$ is closed and convex, being an intersection of $K$ with two sublevel sets of
	continuous convex functions. If $K_c=\emptyset$, it is compact; otherwise, were it
	unbounded, its recession cone would be nontrivial
	\cite[Theorem~8.4]{rockafellar1970convex}, so $x_0+\sigma d\in K_c$ for some
	$x_0\in K_c$, some $d\neq 0$ and all $\sigma\geq 0$. Along that ray $u,v\leq c$ by
	definition of $K_c$, while $u,v$ are bounded from below on $K_c\subseteq K$; so
	Corollary~\ref{cor:dicho} applies to each, and alternative (b) there is excluded by the
	upper bound. Hence both $u$ and $v$ are constant along $d$. As the ray lies in $K$, we
	also have $d\in K_\infty$ \cite[Theorem~8.3]{rockafellar1970convex}, whence
	$d\in D=\{0\}$, contradicting $d\neq0$.
\end{proof}

\begin{proposition}\label{prop:nondeg}
	Assume $D=\{0\}$. Then $W_{[0,1]}$ is connected.
\end{proposition}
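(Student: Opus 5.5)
The plan is to confine the whole weak Pareto set to one compact convex set and then invoke \cite[Theorem~6.4]{hieu2025unboundedness} on the restricted problem, as announced in the introduction. With $z^*,w^*$ from Lemma~\ref{lm:segment}(i), set
\[
c:=\max\{u(z^*),\,v(z^*),\,u(w^*),\,v(w^*)\}.
\]
We check that $W_{[0,1]}$ meets $K_c$ in the right way. For $t\in(0,1)$, Lemma~\ref{lm:segment}(ii) gives $W_t\subseteq K_c$. The endpoints $W_0,W_1$ need not lie in $K_c$, since $W_0$ may be unbounded. However, $z^*\in W_0\cap K_c$: we have $u(z^*)\le c$ and $v(z^*)\le c$ by the choice of $c$. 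Likewise $w^*\in W_1\cap K_c$.

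Consider the bicriteria problem $\OP{2}(K_c,u,v)$. By Lemma~\ref{lm:Kc}, $K_c$ is compact; it is convex and described by the convex polynomial system $g_j\le 0$, $u-c\le0$, $v-c\le0$. Hence its scalarized solution mapping $t\mapsto W'_t:=\argmin_{K_c}p_t$ has nonempty compact values. So either Warburton's theorem or \cite[Theorem~6.4]{hieu2025unboundedness} yields that $W'_{[0,1]}=\Solw(K_c,(u,v))$ is connected.

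Next we compare $W'_t$ with $W_t$. For $t\in(0,1)$, $W_t\subseteq K_c\subseteq K$, and $W_t$ is nonempty. Therefore $\min_{K_c}p_t=\min_Kp_t$ and $W'_t=W_t\cap K_c=W_t$. For $t=0$, we have $\min_{K_c}u=\min_Ku$ because $z^*\in K_c$, so $W'_0=W_0\cap K_c$, a nonempty convex subset of $W_0$. Similarly $W'_1=W_1\cap K_c$. Thus
\[
W_{[0,1]}=W'_{[0,1]}\cup W_0\cup W_1 .
\]
The set $W_0$ is convex, hence connected, and it meets the connected set $W'_{[0,1]}$ at $z^*$. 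Likewise $W_1$ meets $W'_{[0,1]}$ at $w^*$. By (T1), applied twice, $W_{[0,1]}$ is connected.

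The main obstacle is the handling of the endpoints. Lemma~\ref{lm:segment}(ii) says nothing at $t=0,1$, and $W_0$ can genuinely escape $K_c$. Gluing the convex sets $W_0,W_1$ back in through the hierarchical minimizers $z^*,w^*$ avoids this. A minor check is that the hypotheses of the cited theorem hold for the restricted data: every $p_t$ is bounded below on the compact set $K_c$, and the solution mapping is bounded-valued because $K_c$ is bounded. If $K_c$ happened to be empty this would be impossible, since $z^*\in K_c$.
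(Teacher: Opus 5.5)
Your proposal is correct and follows the paper's proof essentially step for step: you confine $W_{(0,1)}$ to the compact set $K_c$ via Lemma~\ref{lm:segment}(ii), apply Warburton's theorem (or \cite[Theorem~6.4]{hieu2025unboundedness}) to $\OP{2}(K_c,u,v)$, and reattach the convex sets $W_0,W_1$ through $z^*,w^*$ by (T1). Your four-term $c$ is the same number as the paper's $\max\{u(w^*),v(z^*)\}$, since $u(z^*)=\min_Ku\le u(w^*)$ and $v(w^*)=\min_Kv\le v(z^*)$.
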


\begin{proof}
	Put $c:=\max\{u(w^*),v(z^*)\}$. By Lemma~\ref{lm:Kc} the set $K_c$ is compact, and it
	is given by the convex polynomial system $g_j\leq 0$ $(j=1,\dots,s)$, $u-c\leq 0$,
	$v-c\leq 0$. By Lemma~\ref{lm:segment}(ii), $v(x_t)\leq v(z^*)\leq c$ and
	$u(x_t)\leq u(w^*)\leq c$ for every $t\in(0,1)$ and every $x_t\in W_t$, so
	$W_{(0,1)}\subseteq K_c$; moreover $z^*,w^*\in K_c$, because $u(z^*)=\min_Ku\leq u(w^*)$ and $v(w^*)=\min_Kv\leq v(z^*)$.
	Consider the \emph{subproblem} $\OP{2}(K_c,u,v)$ and put $\widetilde W_t:=\argmin_{K_c}p_t$.
	Since $K_c\subseteq K$ contains a point of every $W_t$, namely $z^*$ if $t=0$,
	every $x_t\in W_t$ if $t\in(0,1)$, and $w^*$ if $t=1$, we have $\min_{K_c}p_t=\min_Kp_t$ and $\widetilde W_t=W_t\cap K_c$ for all
	$t\in[0,1]$; in particular $\widetilde W_t=W_t$ for $t\in(0,1)$. As $K_c$ is compact,
	the solution mapping of the scalarization of $\OP{2}(K_c,u,v)$ has bounded values, so
	$\widetilde W_{[0,1]}=\Solw(K_c,(u,v))$ is connected by
	\cite[Theorem~6.4]{hieu2025unboundedness}, or already by Warburton's theorem
	\cite{warburton1983quasiconcave}. Finally, $W_{[0,1]}=\widetilde W_{[0,1]}\cup W_0\cup W_1$, because $\widetilde W_t=W_t$ for $t\in(0,1)$ and $\widetilde W_t\subseteq W_t$ for $t\in\{0,1\}$; the sets $W_0$ and $W_1$ are convex, hence connected, and they meet $\widetilde W_{[0,1]}$ at $z^*$ and at $w^*$, respectively; so $W_{[0,1]}$ is connected by (T1), applied twice.
\end{proof}

\subsection{The degenerate case \texorpdfstring{$D\neq\{0\}$}{D nontrivial}}\label{subs:deg}

When $D\neq\{0\}$, the problem $\OP{2}(K,u,v)$ is not treated directly. We build instead a \emph{new} problem $\OP{2}(K',u',v')$ of the same type on a hyperplane $H\cong\R^{n-1}$ orthogonal to some $d\in D\setminus\{0\}$; its constraint set $K'=\pi_H(K)$ is the orthogonal projection of $K$ onto $H$ and need not be contained in $K$. Along a direction $d\in D$ the objectives are constant, and so are the
constraints $g_j$ with $\gamma_j=0$ in Corollary~\ref{cor:constr}, whereas those with
$\gamma_j>0$ decrease strictly; Proposition~\ref{prop:split} keeps the constraints that are constant along $d$, which define a relaxation $K_J$ of $K$, and encodes the strictly decreasing ones in a threshold function $\tau$.
That the projection $K'$ is described by the constraints constant along $d$ is the content of \cite[Theorem~1]{belousov2002frank}, and the induction of Proposition~\ref{prop:bi}
follows \cite[Theorem~3]{belousov2002frank}; what is new is part (ii) of Proposition~\ref{prop:split}, the threshold $\tau$ recording \emph{how far} a point of $K'$ must be moved along $d$ to enter $K$, which
turns the reduction into the homeomorphism of Proposition~\ref{prop:deg}.

\begin{proposition}[splitting $K$ along $d$]\label{prop:split}
	Let $d\in D\setminus\{0\}$, put $H:=d^\perp$, and write $x=\pi_Hx+\pi_d(x)\,d$ with $\pi_Hx\in H$ and $\pi_d(x)\in\R$; for $A\subseteq H$ put
	$A\oplus\R d:=\{z+\sigma d:\ z\in A,\ \sigma\in\R\}$. With the $\gamma_j\geq 0$ of
	Corollary~\ref{cor:constr}, put
	\[
	J:=\{j:\gamma_j=0\},\qquad J^c:=\{1,\dots,s\}\setminus J,\qquad
	K_J:=\{x: g_j(x)\leq 0,\ j\in J\},
	\]
	where $K_J:=\R^n$ if $J=\emptyset$, and
	$K':=K_J\cap H$. Then:
	\begin{enumerate}[label=\textnormal{(\roman*)},leftmargin=2.1em,itemsep=0pt,topsep=0pt,parsep=0pt]
		\item $K\subseteq K_J$, $K_J$ is invariant under $\R d$, and $K_J=K'\oplus\R d$
		with $K'=\{z\in H: g_j(z)\leq 0,\ j\in J\}\neq\emptyset$, given by a convex
		polynomial system on $H\cong\R^{n-1}$;
		\item if $J^c\neq\emptyset$, then $\tau:=\max_{j\notin J}g_j/\gamma_j$ is continuous
		on $\R^n$ and, for $x\in K_J$ and $\sigma\in\R$,
		$x+\sigma d\in K\iff\sigma\geq\tau(x)$; if $J^c=\emptyset$, then $K=K_J$;
		\item $K_J=K+\R d$ and $K'=\pi_H(K)$;
		\item for every $t\in[0,1]$, $\inf_{K_J}p_t=\inf_Kp_t$ and $W_t=W^J_t\cap K$, where $W^J_t:=\argmin_{K_J}p_t$;
		\item $u':=u|_H$, $v':=v|_H$ are convex polynomials bounded from below on $K'$, so
		$\OP{2}(K',u',v')$ is a problem of the type considered in this section.
	\end{enumerate}
\end{proposition}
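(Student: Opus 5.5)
The plan is to prove (i)–(v) in order, using throughout the identity of Corollary~\ref{cor:constr}, $g_j(x+\sigma d)=g_j(x)-\gamma_j\sigma$ for all $x\in\R^n$ and $\sigma\in\R$. This is legitimate because $d\in D\subseteq K_\infty$.

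For (i), the inclusion $K\subseteq K_J$ is immediate, since $K_J$ drops some constraints. For $j\in J$ we have $\gamma_j=0$, so $g_j(x+\sigma d)=g_j(x)$ for all $\sigma$, and $K_J$ is invariant under $\R d$. Writing $x=\pi_Hx+\pi_d(x)d$, invariance gives $x\in K_J$ if and only if $\pi_Hx\in K_J$. Hence $K_J=(K_J\cap H)\oplus\R d=K'\oplus\R d$, and $K'=\{z\in H:g_j(z)\le0,\ j\in J\}$. This set is nonempty because $\pi_Hx\in K'$ for any $x\in K\neq\emptyset$. Composing the $g_j$ with a linear isometry $\R^{n-1}\to H$ gives convex polynomials, so $K'$ is a convex polynomial system on $H\cong\R^{n-1}$.

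For (ii), $\tau$ is a finite maximum of polynomials, hence continuous. Take $x\in K_J$. The constraints $j\in J$ hold along the whole line $x+\R d$, so $x+\sigma d\in K$ if and only if $g_j(x)-\gamma_j\sigma\le0$ for every $j\notin J$. Since $\gamma_j>0$ there, this is $\sigma\ge g_j(x)/\gamma_j$ for all $j\notin J$, that is, $\sigma\ge\tau(x)$. If $J^c=\emptyset$, then trivially $K=K_J$.

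For (iii), the inclusion $K+\R d\subseteq K_J$ follows from (i). Conversely, for $x\in K_J$, (ii) gives $x+\tau(x)d\in K$ when $J^c\neq\emptyset$, and otherwise $x\in K$ already; in both cases $x\in K+\R d$. Projecting with $\pi_H$, which kills $\R d$, yields $\pi_H(K)=\pi_H(K_J)=K'$ by (i).

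For (iv), $p_t$ is constant along $d$ because $d\in D$. Since $K_J=K+\R d$, every value of $p_t$ on $K_J$ is attained on $K$, so the two infima coincide. The minimum is finite and attained by Theorem~\ref{thm:FW}. Consequently a point of $K$ minimizes $p_t$ on $K$ if and only if it minimizes $p_t$ on $K_J$, which is $W_t=W^J_t\cap K$.

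For (v), $u'$ and $v'$ are restrictions of convex polynomials to a linear subspace, hence convex polynomials in coordinates of $H$. Each $z\in K'$ satisfies $z=\pi_Hx=x-\pi_d(x)d$ for some $x\in K$. Constancy along $d$ then gives $u'(z)=u(x)\ge\inf_Ku>-\infty$, and the same holds for $v$.

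The only point needing care is the sign handling in (ii) and keeping track of the constraints with $\gamma_j=0$. The rest is bookkeeping, so I expect no real obstacle.
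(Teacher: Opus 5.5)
Your proof is correct and follows the paper's argument essentially step by step. The differences are cosmetic. You obtain $\pi_H(K)=K'$ and the equality of infima in (iv) directly from $K_J=K+\R d$ rather than re-invoking (ii). Your appeal to Theorem~\ref{thm:FW} in (iv) is harmless but not needed: the identity $W_t=W^J_t\cap K$ follows from the equality of the infima alone.
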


\begin{proof}
	By Corollary~\ref{cor:constr}, $g_j(x+\sigma d)=g_j(x)-\gamma_j\sigma$ with
	$\gamma_j\geq 0$; so the constraints split into those constant along $d$, indexed by
	$J$, and those strictly decreasing along $d$, indexed by $J^c$.

	(i) $K\subseteq K_J$ because $K_J$ retains only part of the constraints. For $j\in J$
	we have $g_j(x+\sigma d)=g_j(x)$ for \emph{all} $\sigma\in\R$, so $K_J$ is invariant under $\R d$. Hence
	$x\in K_J\iff\pi_Hx\in K_J\cap H=K'$, whence $K_J=K'\oplus\R d$. Since $K\subseteq K_J$, projecting a point of $K$ onto $H$ gives a point of $K_J\cap H=K'$, so $K'\neq\emptyset$; the description of $K'$ is just its definition.

	(ii) Let $J^c\neq\emptyset$. As $\gamma_j>0$ for $j\notin J$, $\tau$ is well defined and continuous, being a maximum of finitely many polynomials divided by positive constants. Let $x\in K_J$ and $\sigma\in\R$. For $j\in J$ we have
	$g_j(x+\sigma d)=g_j(x)\leq 0$ because $x\in K_J$, while for $j\notin J$, $g_j(x+\sigma d)=g_j(x)-\gamma_j\sigma\leq 0\iff\sigma\geq g_j(x)/\gamma_j$, because $\gamma_j>0$. Thus $x+\sigma d\in K$ if and only if $\sigma\geq g_j(x)/\gamma_j$ for all $j\notin J$, that is, $\sigma\geq\tau(x)$. If $J^c=\emptyset$, then $K_J=K$ by definition.

	(iii) $K+\R d\subseteq K_J$ by (i). Conversely, for $x\in K_J$ part (ii) provides $\sigma$ with $x+\sigma d\in K$ (any $\sigma$ if $J^c=\emptyset$), so $x\in K+\R d$. Then $\pi_H(K)\subseteq\pi_H(K_J)=K'$ by (i), and for $z\in K'\subseteq K_J$ part (ii) applied to $z$ provides $\sigma$ with $z+\sigma d\in K$, whence $z=\pi_H(z+\sigma d)\in\pi_H(K)$.

	(iv) As $d\in D$, every $p_t$ is constant along $d$. From $K\subseteq K_J$ we get
	$\inf_{K_J}p_t\leq\inf_Kp_t$. Conversely, for $x\in K_J$ part (ii) provides $\sigma$
	with $x+\sigma d\in K$, and then $p_t(x)=p_t(x+\sigma d)\geq\inf_Kp_t$. The two infima
	being equal, a point of $K$ minimizes $p_t$ over $K$ exactly when it minimizes $p_t$
	over $K_J$, i.e., $W_t=W^J_t\cap K$.

	(v) By (iii) and constancy along $d$, $u$ and $v$ are bounded from below on
	$K_J\supseteq K'$ by the same constants as on $K$; restrictions of convex polynomials
	to $H$ are convex polynomials.
\end{proof}

\begin{remark}\label{rmk:relax}
	By Proposition~\ref{prop:split}(iii), passing from $K$ to $K_J=K+\R d$ is a relaxation, and Proposition~\ref{prop:split}(ii) records where each line $x+\R d$, $x\in K_J$, meets $K$: in the half-line $\{x+\sigma d:\sigma\geq\tau(x)\}$ when $J^c\neq\emptyset$, and in the whole line when $J^c=\emptyset$. Cutting $K$ by $H$ directly, that is, replacing $K'$ by $K\cap H$, would lose solutions, since $K\cap H$ may be a proper subset of $K'=\pi_H(K)$, as in Example~\ref{ex:deg}, or even empty, as for the translate of $K$ there by $(1,0)$.
\end{remark}

\begin{proposition}[reduction of the dimension]\label{prop:deg}
	In the setting of Proposition~\ref{prop:split}, with $W'_{[0,1]}$ the set \eqref{eq:WT}
	attached to $\OP{2}(K',u',v')$,
	\begin{equation}\label{eq:Wdeg}
		W_{[0,1]}\cong
		\begin{cases}
			W'_{[0,1]}\times[0,\infty) & \text{if } J^c\neq\emptyset,\\
			W'_{[0,1]}\times\R & \text{if } J^c=\emptyset.
		\end{cases}
	\end{equation}
\end{proposition}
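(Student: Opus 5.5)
The plan is to describe $W_{[0,1]}$ as a ``graph-like'' set over $W'_{[0,1]}$ in the coordinates $x=\pi_Hx+\pi_d(x)\,d$. I would first show that $W^J_t=W'_t\oplus\R d$ for every $t\in[0,1]$, where $W'_t:=\argmin_{K'}p'_t$ and $p'_t:=(1-t)u'+tv'=p_t|_H$. Since $K_J=K'\oplus\R d$ by Proposition~\ref{prop:split}(i), and $p_t$ is constant along $d$ because $d\in D$, we have $p_t(x)=p_t(\pi_Hx)=p'_t(\pi_Hx)$. Hence $\inf_{K_J}p_t=\inf_{K'}p'_t$, and $x\in K_J$ minimizes $p_t$ exactly when $\pi_Hx$ minimizes $p'_t$ on $K'$. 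Then Proposition~\ref{prop:split}(iv) gives $W_t=(W'_t\oplus\R d)\cap K$. Taking the union over $t$ yields $W_{[0,1]}=(W'_{[0,1]}\oplus\R d)\cap K$.

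If $J^c=\emptyset$, then $K=K_J$, so $W_{[0,1]}=W'_{[0,1]}\oplus\R d$. The linear isomorphism $H\times\R\to\R^n$, $(z,\sigma)\mapsto z+\sigma d$, restricts to a homeomorphism $W'_{[0,1]}\times\R\to W_{[0,1]}$.

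If $J^c\neq\emptyset$, Proposition~\ref{prop:split}(ii) applied at $z\in W'_{[0,1]}\subseteq K'\subseteq K_J$ gives
\[
W_{[0,1]}=\{z+\sigma d:\ z\in W'_{[0,1]},\ \sigma\geq\tau(z)\}.
\]
I would then define $\Phi:W'_{[0,1]}\times[0,\infty)\to W_{[0,1]}$ by $\Phi(z,r):=z+(\tau(z)+r)d$. Its inverse is $x\mapsto(\pi_Hx,\ \pi_d(x)-\tau(\pi_Hx))$, where $\pi_d(x)=\langle x,d\rangle/\|d\|^2$. Both maps are continuous because $\tau$ is continuous on $\R^n$ and $\pi_H,\pi_d$ are linear. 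Well-definedness of $\Phi$ and of its inverse follows from the description above. The two maps are mutually inverse because $\pi_H(z+\sigma d)=z$ and $\pi_d(z+\sigma d)=\sigma$ for $z\in H$.

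The main point needing care is the first step, namely the identification of $W^J_t$ with $W'_t\oplus\R d$. This is where one uses that $K'$ equals $K_J\cap H$ and that $p'_t$ is the restriction of $p_t$, rather than working with some cut of $K$. The rest is bookkeeping. One should also note that the sets $W'_t$ are those of the problem $\OP{2}(K',u',v')$, which is legitimate by Proposition~\ref{prop:split}(v). Identifying $H$ with $\R^{n-1}$ by an isometry does not affect homeomorphism types.
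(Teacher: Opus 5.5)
Your proposal is correct and follows essentially the same route as the paper: you establish $W^J_{[0,1]}=W'_{[0,1]}\oplus\R d$, intersect with $K$ using Proposition~\ref{prop:split}(ii) and (iv), and use the same map $\Phi(z,r)=z+(\tau(z)+r)d$ with the same inverse $x\mapsto(\pi_Hx,\pi_d(x)-\tau(\pi_Hx))$. The only difference is organizational: you get $W^J_t=W'_t\oplus\R d$ in one step from $p_t=p'_t\circ\pi_H$ and $K_J=K'\oplus\R d$, whereas the paper first notes that $W^J_t$ is invariant under $\R d$ and then identifies the slice $W^J_t\cap H=W'_t$.
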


\begin{proof}
	Each $W^J_t$ is invariant under $\R d$, since $K_J$ is by
	Proposition~\ref{prop:split}(i) and $p_t$ is constant along $d$; hence so is
	$W^J_{[0,1]}:=\bigcup_tW^J_t$, and therefore
	$W^J_{[0,1]}=(W^J_{[0,1]}\cap H)\oplus\R d$, as in the proof of
	Proposition~\ref{prop:split}(i).

	We identify the slice $W^J_{[0,1]}\cap H$. As $u'=u|_H$ and $v'=v|_H$, the scalarization of
	$\OP{2}(K',u',v')$ at $t$ is $p_t|_H$, with solution set $W'_t:=\argmin_{K'}p_t$, and
	we claim that
	$
	W^J_t\cap H=W'_t .$
	Indeed, if $x\in W^J_t\cap H$, then $x\in K_J\cap H=K'$ and $p_t(x)=\inf_{K_J}p_t\leq\inf_{K'}p_t$ because $K'\subseteq K_J$; so $x\in W'_t$.
	Conversely, let $z\in W'_t$ and let $x\in K_J$ be arbitrary. Then $\pi_Hx\in K'$ by
	Proposition~\ref{prop:split}(i), and $x-\pi_Hx\in\R d$, so $p_t(x)=p_t(\pi_Hx)$ because
	$p_t$ is constant along $d$, and $p_t(\pi_Hx)\geq p_t(z)$ because $z$ minimizes $p_t$
	over $K'$. So $p_t\geq p_t(z)$ on $K_J$, and $z\in K'=K_J\cap H$; hence
	$z\in W^J_t\cap H$. Taking unions over $t$ gives $W^J_{[0,1]}\cap H=W'_{[0,1]}$, so
	\begin{equation}\label{eq:cylinder}
		W^J_{[0,1]}=W'_{[0,1]}\oplus\R d .
	\end{equation}
If $J^c=\emptyset$, then $K=K_J$ by Proposition~\ref{prop:split}(ii), so $W_{[0,1]}=W^J_{[0,1]}=W'_{[0,1]}\oplus\R d\cong W'_{[0,1]}\times\R$ by \eqref{eq:cylinder}.
If $J^c\neq\emptyset$, then Proposition~\ref{prop:split}(iv), \eqref{eq:cylinder} and
	Proposition~\ref{prop:split}(ii), applied at $z\in K'\subseteq K_J$, give in turn
	\[
	W_{[0,1]}=W^J_{[0,1]}\cap K=\{z+\sigma d:\ z\in W'_{[0,1]},\ \sigma\geq\tau(z)\}.
	\]
	Hence $\Phi(z,\rho):=z+(\tau(z)+\rho)d$ maps $W'_{[0,1]}\times[0,\infty)$ into
	$W_{[0,1]}$, since $\rho\geq0$; it is onto, by the displayed description with
	$\rho:=\sigma-\tau(z)$; it is injective, by projecting onto $H$ and onto $\R d$ in turn;
	it is continuous since $\tau$ is; and its inverse
	$x\mapsto(\pi_Hx,\pi_d(x)-\tau(\pi_Hx))$ is continuous for the same reason.
\end{proof}

By (T2) and (T3), $W_{[0,1]}$ is connected if and only if $W'_{[0,1]}$ is. Geometrically, for $J^c\neq\emptyset$, the map $\Phi$ sends each half-line $\{z\}\times[0,\infty)$ to $z+\tau(z)d+\R_+d$, so that the starting points $z+\tau(z)d$ trace the graph of $\tau|_{W'_{[0,1]}}$; continuity of $\tau$ is what keeps neighboring
half-lines from being displaced abruptly.

\begin{example}[the degenerate reduction with a nonaffine constraint]\label{ex:deg}
	Let $n=2$,
	\[
	K=\{x\in\R^2:\ x_2^2-x_1\leq 0,\ x_2^2-1\leq 0\}=\{x_1\geq x_2^2,\ |x_2|\leq 1\},
	\]
	$u=x_2^2$ and $v=(x_2-1)^2$. Here $K_\infty=\{(a,0):a\geq 0\}$, and for $d=(1,0)$
	both $u$ and $v$ are constant along $d$, being functions of $x_2$ alone; so $d\in D$ and
	$D\neq\{0\}$. Corollary~\ref{cor:constr} gives $g_1(x+\sigma d)=g_1(x)-\sigma$ and
	$g_2(x+\sigma d)=g_2(x)$, so $\gamma_1=1>0$, $\gamma_2=0$, and with $H=\{x_1=0\}$
	Proposition~\ref{prop:split} yields
	\[
	J=\{2\},\qquad K_J=\{|x_2|\leq 1\},\qquad
	K'=\{0\}\times[-1,1],\qquad \tau(x)=x_2^2-x_1 .
	\]
	The threshold is quadratic, and $K\cap H=\{(0,0)\}$ is a single point whereas
	$K'=\pi_H(K)$ is a whole segment. The reduced problem is to minimize
	$(1-t)s^2+t(s-1)^2$ over $s\in[-1,1]$, with minimizer $s=t$, so $W'_{[0,1]}=\{0\}\times[0,1]$, and the description of $W_{[0,1]}$ in the proof of Proposition~\ref{prop:deg} gives
	\[
	W_{[0,1]}=\{(\sigma,s):\ s\in[0,1],\ \sigma\geq s^2\}\cong[0,1]\times[0,\infty).
	\]
	This
	agrees with the direct computation: $p_t$ does not involve $x_1$, so
	$W_t=\{(x_1,t):x_1\geq t^2\}$. The set $W_{[0,1]}$ is unbounded, and its left boundary is a parabolic arc; yet it is homeomorphic to a straight half-strip; see Figure~\ref{fig:deg}.
\end{example}

\begin{figure}[t]
	\centering
	\scalebox{1.45}{%
	\begin{tikzpicture}[x=1.05cm,y=0.80cm,>=stealth,line join=round]
		\colorlet{cKJ}{black!45}
		\colorlet{cK}{teal!70!black}
		\colorlet{cW}{orange!80!black}
		\colorlet{cT}{violet}
		\colorlet{cH}{blue!60!black}
		\fill[black!10] (-1.35,-1) rectangle (2.95,1);
		\draw[cKJ,thick] (-1.35,1) -- (2.95,1);
		\draw[cKJ,thick] (-1.35,-1) -- (2.95,-1);
		\node[cKJ,scale=0.55,above right] at (-1.33,1.0) {$x_2=1$};
		\node[cKJ,scale=0.55,below right] at (-1.33,-1.0) {$x_2=-1$};
		\fill[teal!18] plot[domain=-1:1,samples=50] ({\x*\x},\x)
			-- (2.95,1) -- (2.95,-1) -- cycle;
		\fill[orange!42] plot[domain=0:1,samples=35] ({\x*\x},\x)
			-- (2.95,1) -- (2.95,0) -- cycle;
		\draw[cK,very thick] plot[domain=-1:1,samples=50] ({\x*\x},\x);
		\draw[->] (-1.5,0) -- (3.15,0) node[right] {\scriptsize $x_1$};
		\draw[cH,thick,->] (0,-1.9) -- (0,1.9) node[above] {\scriptsize $x_2$};
		\node[cH,scale=0.65,left] at (-0.05,1.55) {$H$};
		\node[cH,scale=0.65,left] at (-0.05,-1.55) {$H$};
		\draw[cK,line width=3pt] (0,-1) -- (0,1);
		\draw[cW,line width=1.3pt] (0,0) -- (0,1);
		\draw[very thick,->] (0,0) -- (1,0);
		\node[scale=0.6,below] at (0.68,-0.02) {$d=(1,0)$};
		\fill (0,0) circle (1.6pt);
		\draw[cT,thick,dashed] (0,0.72) -- (0.518,0.72);
		\draw[cT,thick,->] (0.518,0.72) -- (2.25,0.72);
		\fill[cT] (0.518,0.72) circle (1.5pt);
		\node[cT,above,scale=0.55] at (0.26,0.74) {$\tau(z)$};
		\node[cK,scale=0.65] at (2.15,-0.55) {$K$};
		\node[cW,scale=0.65] at (2.20,0.33) {$W_{[0,1]}$};
		\node[cKJ,scale=0.65] at (-0.95,-0.55) {$K_J$};
		\node[cK,scale=0.65,left] at (-0.05,-0.55) {$K'$};
		\node[cW,scale=0.65,left] at (-0.05,0.55) {$W'_{[0,1]}$};
		\node[cK,scale=0.55] at (1.25,-0.83) {$x_1=x_2^2$};
	\end{tikzpicture}}
	\caption{Example~\ref{ex:deg}. The region to the right of the parabola is
	{\color{teal!70!black}$K$}; the arrow from the origin is the direction $d=(1,0)$,
	and the line {\color{blue!60!black}$H=\{x_1=0\}$} is the $x_2$-axis. The gray strip
	is the relaxation {\color{black!45}$K_J=K+\R d$}, and its slice by $H$ is the whole
	segment {\color{teal!70!black}$K'=\pi_H(K)$}, whereas $K\cap H$ is the origin alone. The thin orange segment drawn inside $K'$ is {\color{orange!80!black}$W'_{[0,1]}$}, the upper half of $K'$, and the orange part of $K$ is {\color{orange!80!black}$W_{[0,1]}$}: from each
	$z\in W'_{[0,1]}$ a half-line runs in the direction $d$, starting at the point
	$z+\tau(z)d$ of the parabola ({\color{violet}marked in violet}), at distance
	{\color{violet}$\tau(z)$} from $H$.}
	\label{fig:deg}
\end{figure}

\subsection{The induction}\label{subs:bithm}

\begin{proposition}\label{prop:bi}
	Let $u,v$ be convex polynomials bounded from below on $K$. Then $W_{[0,1]}$ is
	connected.
\end{proposition}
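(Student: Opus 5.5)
We argue by induction on the dimension $n$ of the ambient space, the statement being understood for every nonempty set $K$ given by a convex polynomial system on $\R^n$ and every pair $u,v$ of convex polynomials bounded from below on $K$. The split is the dichotomy on the cone $D$ of Remark~\ref{rmk:D}. If $D=\{0\}$, Proposition~\ref{prop:nondeg} already gives the connectedness of $W_{[0,1]}$, with no appeal to the induction hypothesis. If $D\neq\{0\}$, we pick any $d\in D\setminus\{0\}$ and pass to the reduced problem $\OP{2}(K',u',v')$ of Proposition~\ref{prop:split}. By Proposition~\ref{prop:deg}, $W_{[0,1]}$ is homeomorphic to $W'_{[0,1]}\times[0,\infty)$ or to $W'_{[0,1]}\times\R$. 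By (T2) and (T3), $W_{[0,1]}$ is connected as soon as $W'_{[0,1]}$ is.

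For the base case $n=0$, the space $\R^0$ is a single point, so $W_{[0,1]}$ is a point and hence connected. Alternatively one can start at $n=1$: there a nonzero $d\in D$ makes $u$ and $v$ constant on $\R$, so each $W_t=K$ and $W_{[0,1]}=K$ is convex. Either way the induction only needs the step from $n-1$ to $n$, and within that step only the degenerate case uses the hypothesis.

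For the inductive step in the degenerate case, we identify $H=d^\perp$ with $\R^{n-1}$ through an orthonormal basis, which is a linear isometry. Under this identification:
\begin{enumerate}[label=\textnormal{(\alph*)},leftmargin=2.1em,itemsep=0pt,topsep=0pt,parsep=0pt]
\item $K'$ becomes a nonempty set cut out by the convex polynomials $g_j|_H$, $j\in J$, by Proposition~\ref{prop:split}(i);
\item $u'$ and $v'$ become convex polynomials bounded from below on $K'$, by Proposition~\ref{prop:split}(v).
\end{enumerate}
Composing polynomials with a linear map again gives polynomials and preserves convexity, so $\OP{2}(K',u',v')$ is a problem of the same type in dimension $n-1$. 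Moreover, connectedness of $W'_{[0,1]}$ does not depend on the coordinates chosen, by (T2). The induction hypothesis therefore gives that $W'_{[0,1]}$ is connected, and hence so is $W_{[0,1]}$.

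The main obstacle is checking that the reduced problem really satisfies every hypothesis the induction requires. Two points need care. First, when $J=\emptyset$ we have $K'=H$, i.e.\ $s=0$ in dimension $n-1$, and this is allowed. Second, $K'=\pi_H(K)$ is described as an intersection $K_J\cap H$, which is a convex polynomial system, and not merely as a projection. Both points are supplied by Proposition~\ref{prop:split}. It remains only to note that $D$ is recomputed afresh for the new data at each level, and that the dimension drops by one at each degenerate step, so the recursion terminates.
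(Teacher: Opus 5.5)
Your proposal is correct and follows the paper's proof. Both argue by induction on $n$: Proposition~\ref{prop:nondeg} handles $D=\{0\}$, and otherwise Propositions~\ref{prop:split} and~\ref{prop:deg} reduce the problem to one on $H\cong\R^{n-1}$, with (T2)--(T3) transferring connectedness back. Your extra checks are sound and only make explicit what the paper leaves implicit: the orthonormal identification of $H$ with $\R^{n-1}$, the case $J=\emptyset$ giving $s=0$, and the alternative base case $n=1$.
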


\begin{proof}
	We argue by induction on the dimension $n$ of the ambient space, for all data as in
	Section~\ref{subs:seg}. For $n=0$, $K=\{0\}=W_{[0,1]}$. Let $n\geq 1$ and assume the
	claim in dimension $n-1$. If $D=\{0\}$, apply Proposition~\ref{prop:nondeg}. If
	$D\neq\{0\}$, pick $d\in D\setminus\{0\}$; Proposition~\ref{prop:split}(v) produces a
	problem of the same kind on $H\cong\R^{n-1}$, whose set $W'_{[0,1]}$ is connected by
	the induction hypothesis, and $W_{[0,1]}$ is connected by \eqref{eq:Wdeg} and
	(T2)--(T3).
\end{proof}

\section{Proof of the main theorem}\label{sec:main}

\subsection{From \texorpdfstring{$m$}{m} objectives to two}\label{subs:mto2}

Here the convexity of $\Lambda$ pays off: the segment joining two weights in $\Lambda$ lies in $\Lambda$, so a bicriteria problem can be attached to it.

\begin{proof}[Proof of Theorem~\ref{thm:main}, connectedness]
	For $\Lambda=\emptyset$ the set $\Solw(K,f)$ is empty by \eqref{eq:scalar}, hence
	connected. Assume $\Lambda\neq\emptyset$ and fix once and for all
	$\hat\xi\in\Lambda$. Let $\xi\in\Lambda$ be arbitrary. By \eqref{eq:Lconvex},
	$[\hat\xi,\xi]\subseteq\Lambda$. Put $u:=f_{\hat\xi}$ and $v:=f_{\xi}$, convex
	polynomials bounded from below on $K$ because both weights lie in $\Lambda$; since
	$\eta\mapsto f_\eta$ is affine,
	\[
	(1-t)u+tv=f_{\xi^t},\qquad \xi^t:=(1-t)\hat\xi+t\xi\in\Lambda\quad(t\in[0,1]),
	\]
	so the bicriteria problem $\OP{2}(K,u,v)$ has $W_t=\Theta(\xi^t)$ and
	$W_{[0,1]}=\Theta([\hat\xi,\xi])$. Hence Proposition~\ref{prop:bi}, applied to
	$(u,v)$, shows that $\Theta([\hat\xi,\xi])$ is connected. Moreover, since
	$\hat\xi,\xi\in[\hat\xi,\xi]\subseteq\Lambda$, we have by \eqref{eq:scalar}
	\[
	\Theta(\hat\xi)\cup\Theta(\xi)\subseteq\Theta([\hat\xi,\xi])\subseteq\Theta(\Lambda)
	=\Solw(K,f) .
	\]
	Taking the union over $\xi\in\Lambda$ and using the first inclusion above, we obtain \begin{equation}\label{eq:corewing}
		\Solw(K,f)=\bigcup_{\xi\in\Lambda}\Theta([\hat\xi,\xi]),\quad
		\Theta(\hat\xi)\subseteq\Theta([\hat\xi,\xi])\ (\xi\in\Lambda),
	\end{equation} Thus $\Solw(K,f)$ is a union of connected sets all containing $\Theta(\hat\xi)$, which is nonempty because $\hat\xi\in\Lambda$; hence $\Solw(K,f)$ is connected by (T1).
\end{proof}

\begin{remark}[core and wings]\label{rmk:core}
	Call $\Theta(\hat\xi)$ the \emph{core} and the sets $\Theta([\hat\xi,\xi])$,
	$\xi\in\Lambda$, the \emph{wings} issued from it. The core is convex and nonempty, each
	wing is connected, and by \eqref{eq:corewing} every wing contains the core and the
	solution set is the union of the wings. In particular, any two points of $\Solw(K,f)$ lie in two wings, whose union is connected by (T1): two wings suffice to join any two points, whatever $m$ is.
\end{remark}

\subsection{Path-connectedness}\label{subs:path}

\begin{proof}[Proof of Theorem~\ref{thm:main}, path-connectedness]
	The set $\Solw(K,f)$ is semi-algebraic: it is defined by the first-order formula
	\[
	\x\in K\ \wedge\ \neg\exists x\Bigl(x\in K\wedge\bigwedge\nolimits_{i=1}^m
	f_i(x)<f_i(\x)\Bigr)
	\]
	in the polynomial data, so the Tarski--Seidenberg theorem
	\cite[Chapter~2]{bochnak2013real} applies; this device is used systematically in
	\cite{hieu2020application}. A semi-algebraic set has finitely many connected
	components, each of them semi-algebraic and path-connected
	\cite[Theorem~2.4.5 and Proposition~2.5.13]{bochnak2013real}. Being connected and nonempty, $\Solw(K,f)$ is a single such component and is therefore path-connected.
\end{proof}

\subsection{Scope of the hypotheses}\label{subs:scope}

\begin{remark}[where the polynomial structure is used]\label{rmk:poly}
	The polynomial structure enters at exactly three places: Lemma~\ref{lm:lindec}, used only with
	$p^\infty(d)=0$ in Corollary~\ref{cor:dicho} and with $p^\infty(d)\leq 0$ in
	Corollary~\ref{cor:constr};
	Theorem~\ref{thm:FW}, wherever a solution set is asserted to be nonempty, in
	particular in Lemma~\ref{lm:segment}(i), and for the convexity \eqref{eq:Lconvex} of
	$\Lambda$, which rests on identifying $\Lambda$ with the set of weights $\xi$ for
	which $f_\xi$ is bounded from below on $K$; and semi-algebraicity, for
	path-connectedness only. Everything else -- part (ii) of the segment lemma, the scalarization identity \eqref{eq:scalar}, the connectedness of the weak Pareto
	solution set over a compact convex set \cite{warburton1983quasiconcave}, and the facts (T1)--(T3) -- holds for arbitrary continuous convex data.
\end{remark}

\begin{remark}[weak versus Pareto solutions]\label{rmk:pareto}
	Theorem~\ref{thm:main} concerns \emph{weak} solutions. For the Pareto solution set $\Sol(K,f)$, the set of $\x\in K$ for which no $x\in K$ satisfies $f(x)-f(\x)\in-\R^m_+\setminus\{0\}$, only the inclusions $\bigcup_{\xi\in\Lambda\cap\ri\Delta}\Theta(\xi)\subseteq\Sol(K,f)\subseteq\Solw(K,f)$ are available, with $\ri\Delta$ the relative interior of $\Delta$ \cite[Chapter~3]{ehrgott2005multicriteria}; so the method does not reach $\Sol(K,f)$.
	The obstruction is not peculiar to this method: already for strictly quasi-concave data
	the Pareto solution set may be disconnected, with all its components unbounded
	\cite{hoa2007unbounded}.
\end{remark}

Two questions accordingly remain open. The first is the Pareto half of \cite[Question~9.8]{yen2011linear}; see Remark~\ref{rmk:pareto}. The second is whether $\Solw(K,f)$ is still connected in the general convex case, with $K$ closed convex and the $f_i$ convex and continuous but not necessarily polynomial: no
explicit example of a disconnected weak Pareto solution set is known
\cite[Remark~6.1]{hieu2025unboundedness}, and Theorem~\ref{thm:main} shows that no such
example can have convex polynomial data.

\section*{Acknowledgments}
The author acknowledges Ho Chi Minh City University of Technology (HCMUT), VNU-HCM, for supporting this study. During the preparation of this work, the author used Claude Fable 5 (Anthropic) to assist with the writing and to suggest some elements of the argument. The author reviewed and edited all content and takes full responsibility for the final manuscript.

\bibliographystyle{amsplain-unsrt}
\bibliography{references}

\end{document}